\theoremstyle{plain}
\newtheorem{theorem}{Theorem}[section]
\newtheorem{corollary}[theorem]{Corollary}
\newtheorem{lemma}[theorem]{Lemma}
\theoremstyle{definition}
\newtheorem{definition}[theorem]{Definition}
\newtheorem{algorithm}[theorem]{Algorithm}
\newcommand{\rr}{\mathbb{R}}
\newcommand{\cc}{\mathbb{C}}
\newcommand{\calM}{\mathcal{M}}
\newcommand{\calN}{\mathcal{N}}
\newcommand{\calP}{\mathcal{P}}
\newcommand{\Le}{\reflectbox{L}}
\title{Positroids have the Rayleigh property}
\author{Cameron Marcott}
\date{}
\begin{document}
\maketitle

\begin{abstract}
{\bf{Abstract:}} The Rayleigh property is a negative correlation property of electrical networks, which was generalized to matroids by Choe and Wagner. We prove that positroids, a class of matroid introduced by Postnikov which have seen many recent applications in both math and physics, have the Rayleigh property.
\end{abstract}

\section{Introduction}

Postnikov introduced positroids, matroids representable by a point in the Grassmannian with all positive Pl\"ucker coordinates, in \cite{postnikov:positive_grassmannian}. While the collection of all points representing a given matroid in the Grassmannian can be arbitrarily singular, Postnikov showed that the set of points in the positive Grassmannian representing a given positroid is homeomorphic to a ball. Positroids have seen many recent applications; a partial list includes Schubert calculus \cite{knutson:positroid}, total positivity \cite{postnikov:positive_grassmannian}, cluster algebras \cite{kodama:kp_solitons}, the inverse boundary problem for electrical networks \cite{postnikov:positive_grassmannian}, scattering amplitudes \cite{arkani:scattering}, and KP solitons \cite{kodama:kp_solitons}.

The Rayleigh monotonicity property of electrical networks says that if the conductance of any wire in a network is increased, then the flow of the network between any two nodes is not decreased. This property is equivalent to the events that an edge $e$ is in a random spanning tree and the event that an edge $f$ is in a random spanning tree are negatively correlated. Choe and Wagner introduced the matroidal Rayleigh property in \cite{choe:rayleigh_matroids}, generalizing a physical property of electrical networks. The Rayleigh property has been applied to study random walks and reversible Markov chains \cite{doyle:random_walks}. A negative correlation property equivalent to the Rayleigh property was studied by Pemantle \cite{pemantle:negative_dependence}. This negative correlation property has applications in matroid theory \cite{choe:rayleigh_matroids, wagner:masons_conjecture}, to log-concavity results \cite{kahn:negative_correlation}, and to approximation algorithms \cite{dubhashi:positive_influence, srinivasan:approximation_algorithms}. A strengthening of this negative correlation properties, the strong Rayleigh property, has connections to the theory of stable polynomials \cite{borcea:negative_dependence, branden:polynomials}.

Our result is that positroids exhibit the Rayleigh property.

\begin{theorem} \label{thm:rayleigh}
Positroids are Rayleigh matroids.
\end{theorem}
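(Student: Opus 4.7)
The plan is to reduce the Rayleigh property to a monomial-wise inequality between basis generating polynomials of smaller positroids and then attack it by an exchange argument driven by the totally nonnegative matrix representation of $M$. A bilinear expansion in the variables $y_e, y_f$ writes
$$Z_M(y) = y_e y_f\, Z_{11}(y') + y_e\, Z_{10}(y') + y_f\, Z_{01}(y') + Z_{00}(y'),$$
for $y' = y|_{E\setminus\{e,f\}}$, where $Z_{ij}$ is the basis generating polynomial of the matroid minor whose bases are $\{B \setminus \{e,f\} : B \cap \{e,f\} \text{ has indicator } ij\}$. A direct computation then gives
$$\Delta_{ef} Z_M \;=\; Z_{10}(y')\, Z_{01}(y') - Z_{11}(y')\, Z_{00}(y'),$$
so $M$ is Rayleigh at $e, f$ if and only if $Z_{10} Z_{01} \geq Z_{11} Z_{00}$ on $\mathbb{R}_{>0}^{E\setminus\{e,f\}}$. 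Since positroids are closed under deletion and contraction, each $Z_{ij}$ is again a positroid basis generating polynomial on the smaller ground set, and since the Rayleigh property is preserved by matroid direct sums, we may reduce to the case where $M$ is connected.

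The core of the proof is to establish $Z_{10} Z_{01} - Z_{11} Z_{00} \geq 0$ monomial-by-monomial by exhibiting, for each multiset $A$ on $E\setminus\{e,f\}$ of the appropriate size, an injection from the \emph{nested} pairs of bases $(B_1,B_2)$ with $\{e,f\}\subseteq B_1$, $\{e,f\}\cap B_2=\emptyset$ and $B_1 \sqcup B_2 = A\sqcup\{e,f\}$ into the \emph{crossed} pairs $(B_1',B_2')$ with $e\in B_1'\setminus B_2'$, $f\in B_2'\setminus B_1'$ and the same multiset union. General matroids admit no such injection, since the Fano matroid is famously non-Rayleigh, so the positroid hypothesis must be used essentially. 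The plan is to construct the injection using a totally nonnegative matrix representative $X$ of $M$ together with three-term Pl\"ucker relations, which in the presence of total nonnegativity take a sign-definite form such as $p_{Iac}\, p_{Ibd} = p_{Iab}\, p_{Icd} + p_{Iad}\, p_{Ibc}$ (for $a < b < c < d$ in cyclic order and $I$ a $(k-2)$-subset) and thus directly provide swaps between nested and crossed pairs with the same multiset union.

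To turn these local Pl\"ucker swaps into a global injection I would induct on $n = |E(M)|$, peeling off an element $x$ selected from the Grassmann necklace (or from a corner of the $\Le$-diagram) of $M$ and invoking the inductive Rayleigh property of $M \setminus x$ or $M / x$, depending on whether $x$ lies in both, in exactly one, or in neither of the bases to be matched. I expect the main obstacle to be the asymmetric case, when $x$ appears in exactly one basis of a nested pair: then neither deletion nor contraction at $x$ preserves the pair, and the injection must combine an inductive step on a smaller positroid with a Pl\"ucker swap at $x$ that replaces $x$ by an element on the other side of the cyclic order. Keeping track of the auxiliary element introduced by this swap, and verifying that it lies in the cyclic gap allowed by the Grassmann necklace so that the resulting pair remains a pair of bases of $M$, is where I expect the technical heart of the proof to lie.
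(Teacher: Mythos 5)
Your opening reduction is exactly the one the paper uses: the Rayleigh inequality for $e,f$ follows from a multiset-union-preserving injection from pairs in $\calP_{ef}\times\calP^{ef}$ to pairs in $\calP^f_e\times\calP^e_f$. But the proposal stops short of actually constructing that injection, and that construction is the entire content of the proof. The inductive scheme you outline leaves its hardest case explicitly open (the ``asymmetric'' case where the peeled element $x$ lies in exactly one basis of the pair), and it is precisely there that the linear-order arguments for lattice path matroids break down; deferring it means the proof is not done. There is also a more structural problem with the Pl\"ucker-relation mechanism. The sign-definite three-term relation $p_{Iac}p_{Ibd}=p_{Iab}p_{Icd}+p_{Iad}p_{Ibc}$ is an identity among the Pl\"ucker coordinates of a single totally nonnegative point; it does yield the basis-exchange fact that if $Iab$ and $Icd$ are bases then so are $Iac$ and $Ibd$, but the Rayleigh inequality concerns the \emph{unweighted} basis enumerator $\sum_B \mathbf{x}^B$, whose coefficient vector $(1,1,\dots,1)$ on the bases is generally not the Pl\"ucker vector of any representative, so the identity does not transfer to the polynomials $Z_{ij}$. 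What remains is only a collection of local swaps on pairs differing in two elements, and assembling these into a globally well-defined, injective, multiset-preserving map on pairs that may differ in many elements is the real difficulty --- you would in effect have to reinvent a global switching argument.

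The paper supplies that global argument by a different mechanism: it realizes the positroid via its \Le-graph, represents a pair of bases as a blue and a green family of vertex-disjoint directed paths, and runs a marker from $f$ that traverses blue edges against their orientation and green edges with it, recoloring as it goes, until it exits at the boundary. Injectivity is immediate because every step is reversible (Lemma \ref{lem:reversible}), termination and the fact that the marker never reaches $e$ or re-enters $f$ follow from reversibility plus acyclicity of the \Le-graph (Lemma \ref{lem:not_f}), and the output is again a valid pair of disjoint path families, hence a pair in $\calP^f_e\times\calP^e_f$ (Lemma \ref{lem:the_right_map}). This sidesteps both the case analysis and the weighting issue: the acyclic planar network encodes the positroid structure directly, so no Pl\"ucker coordinates and no induction on the ground set are needed. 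If you want to salvage your route, the \Le-graph (equivalently, the planar network underlying the totally nonnegative representative) is the right place to make your ``local swaps'' global.
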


This theorem strengthens physical connections of positroids. Rayleigh matroids exhibit many combinatorial properties enjoyed by the electrical networks the Rayleigh property is abstracted from \cite{wagner:electrical_network}. So, Theorem \ref{thm:rayleigh} can be interpreted as saying that positroids might be expected to behave in much the same way of electrical networks. In matroid theory, the problem of describing exactly which matroids posses the Rayleigh property is of interest because Rayleigh matroids represent a large and tractable subclass of gammoids. Theorem \ref{thm:rayleigh} fits nicely into this research program by expanding the set of matroids known to be Rayleigh matroids. Theorem \ref{thm:rayleigh} also provides evidence positroids might enjoy the strong Rayleigh property, which would provide a suite of new tools to study positroids through the theory of stable polynomials \cite{branden:polynomials}. Finally, the connections between Rayleigh properties and probability theory can be used to prove results about fast mixing in the basis exchange graph of a matroid \cite{cohen:fast_mixing}. That is, if one starts from a fixed basis of the matroid and performs just a small number random basis exchanges, they can expect the probability distribution for basis they end up at to be a random distribution. Though we don't pursue the idea here, it would be interesting to interpret what this property says about the cluster algebra associated to a positroid variety.

Lattice path matroids, a subclass of positroids, were shown to posses the Rayleigh property independently by Cohen, Tetali, and Yeliussizov \cite{cohen:fast_mixing} and Xu \cite{xu:thesis}. Geometrically, lattice path matroids are the matroids represented by a generic point in a Richardson variety in a Grassmannian while positroids, are the matroids represented by a generic point in a positroid variety. Knutson, Lam, and Speyer showed in \cite{knutson:positroid} that positroid varieties enjoy many nice properties that Richardson varieties enjoy. This project was undertaken partially as a proof of concept of the combinatorial version of this claim: positroids should enjoy nice properties enjoyed by lattice path matroids. The arguments in \cite{cohen:fast_mixing} and \cite{xu:thesis} do not directly generalize to positroids. These arguments rely on the linear order of the ground set of a lattice path matroid, and fail for the cyclically ordered ground set of a positroid. These arguments both feature a seemingly unavoidable case analysis, which becomes unnecessary in the proof of Theorem \ref{thm:rayleigh}.

Section  \ref{sec:background} provides relevant background on matroids, positroids, and the Rayleigh property. Section \ref{sec:proof} is devoted to proving Theorem \ref{thm:rayleigh}. Rayleigh matroids are closely related to several other classes of matroids coming from strengthening or weakening the Rayleigh property as a negative correlation property. Section \ref{sec:other_classes} briefly discuses some of these classes and whether or not positroids belong to them.

\section{Background} \label{sec:background}

While reading this paper without prior exposure to matroids or positroids should be possible, the uninitiated reader's time would be better spent becoming familiar with these subjects. For introductions to matroids and positroids, we recommend \cite{borovik:coxeter_matroids} and \cite{postnikov:positive_grassmannian} respectively. Our notation concerning matroids and their basis enumerator polynomials is taken from \cite{choe:rayleigh_matroids}. Our perspective on positroids is taken from \cite{ardila:positroids}.

\subsection{Matroids} \label{sec:matroids}

All matroids considered in this paper are on the ground set $[n] = \{1,2,\dots, n\}$. We denote matroids by caligraphic letters and present them as their set of bases. So, for any rank $r$ matroid $\calM$, $\calM \subseteq \binom{[n]}{r}$. To save on ink, we omit curly brackets and commas when writing subsets. So, rather than $\calM = \{ \{1,2\}, \{1,3\},\{2,3\}\}$, we write $\calM = \{ 12,13,23\}$.

For disjoint subsets $I, J \subseteq [n]$, define
\begin{displaymath}
\calM^{J}_{I} = \{ B \in \calM : I \subseteq B, J \cap B = \emptyset \}.
\end{displaymath}
\noindent
When this set is not is empty, it is the minor obtained by contracting the set $I$ and deleting the set $J$ from $\calM$, commonly denoted $\calM /I \setminus J$. In cases where $\calM \setminus J \neq \emptyset$, but has strictly lower rank than $\calM$, $\calM^J = \emptyset \neq \calM \setminus J$.

Let $\mathbf{x} = \{x_1, x_2, \dots, x_n\}$ be a set of indeterminates. For a matroid $\calM$, denote by $M(\mathbf{x})$ the basis enumerator polynomial of the matroid $\calM$. That is,
\begin{displaymath}
M(\mathbf{x}) =
\sum_{B \in \calM}
\mathbf{x}^B,
\end{displaymath}
\noindent
where $\mathbf{x}^B = \prod_{i \in B} x_i$. If a matroid is denoted by a caligraphic letter, its basis enumerator polynomial is denoted by the corresponding roman letter. Observe that for $e, f \in [n]$, the basis enumerator polynomial for $\calM^{f}_{e}$ is
\begin{displaymath}
M^{f}_{e}(\mathbf{x}) =
\left. x_e \frac{\partial}{\partial x_e} M(\mathbf{x})\right|_{x_f = 0}.
\end{displaymath}

\subsection{Positroids} \label{sec:positroids}

Positroids are in bijection with several interesting combinatorial objects including \Le-diagrams, decorated permutations, Grassmann necklaces, and plablic graphs \cite{postnikov:positive_grassmannian}. We find it convenient to define positroids in terms their bases, which may be read off of a \Le-diagram using Proposition 6.2 of \cite{ardila:positroids}.

\begin{definition}
A \Le{\it{-diagram}} (or Le-diagram) is:
\begin{itemize}
\item a lattice path from $(0,0)$ to $(n-r,r)$, together with
\item a filling of the boxes in the Ferrers shape lying above the lattice path with dots such that if a box has a dot above it in the same column and to the left of it in the same row, that box also contains a dot.
\end{itemize}
\end{definition}

Figure \ref{fig:diagram_example} provides an example and non-example of a \Le-diagram.

\begin{figure}
\begin{displaymath}
\begin{tikzpicture}
\draw[thin,gray] (0,0) -- (4,0) -- (4,3) -- (0,3) -- (0,0);
\draw[thin,gray] (1,0) -- (1,3);
\draw[thin,gray] (2,1) -- (2,3);
\draw[thin,gray] (0,1) -- (2,1);
\draw[thin,gray] (0,2) -- (3,2);
\draw[line width=2pt] (0,0) -- (2,0) -- (2,1) -- (3,1) -- (3,3) -- (4,3);
\draw[fill=black] (.5,1.5) circle (.1);
\draw[fill=black] (1.5,1.5) circle (.1);
\draw[fill=black] (2.5,1.5) circle (.1);
\draw[fill=black] (1.5,2.5) circle (.1);
\draw[fill=black] (1.5,.5) circle (.1);

\begin{scope}[shift={(6,0)}]
\draw[thin,gray] (0,0) -- (4,0) -- (4,3) -- (0,3) -- (0,0);
\draw[thin,gray] (1,0) -- (1,3);
\draw[thin,gray] (2,1) -- (2,3);
\draw[thin,gray] (0,1) -- (2,1);
\draw[thin,gray] (0,2) -- (3,2);
\draw[line width=2pt] (0,0) -- (2,0) -- (2,1) -- (3,1) -- (3,3) -- (4,3);
\draw[fill=black] (.5,1.5) circle (.1);
\draw[fill=black] (2.5,1.5) circle (.1);
\draw[fill=black] (1.5,2.5) circle (.1);
\draw[fill=black] (1.5,.5) circle (.1);
\end{scope}
\end{tikzpicture}
\end{displaymath}
\caption{The figure on the left is a \protect\Le-diagram. The figure on the right is not.}
\label{fig:diagram_example}
\end{figure}
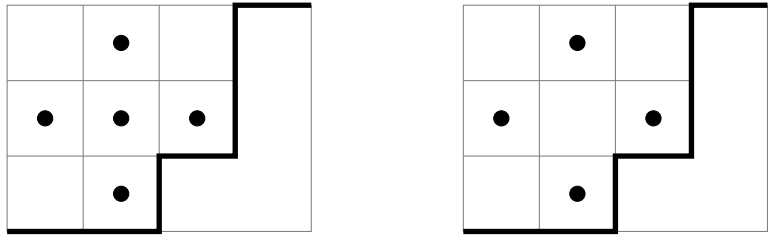

\begin{definition}
Given a \Le-digram, we construct a \Le{\it{-graph}} (or Le-graph) by:
\begin{itemize}
\item placing a node in the middle of each edge of the diagram's boundary path,
\item labelling the nodes on the boundary path $1$ to $n$ starting at the Northeast corner of the diagram and ending at the Southwest corner,
\item adding an edge directed to the left between any two nodes which lie in the same row and which have no other vertices between them in the same row, and
\item adding an edge directed downward between any two nodes which lie in the same column and which have no other vertices between them in the same column.
\end{itemize}
Figure \ref{fig:le_graph} provides an example of a \Le-graph.
\end{definition}

\begin{figure}
\begin{displaymath}
\begin{tikzpicture}
\draw[thin,gray] (0,0) -- (4,0) -- (4,3) -- (0,3) -- (0,0);
\draw[thin,gray] (1,0) -- (1,3);
\draw[thin,gray] (2,1) -- (2,3);
\draw[thin,gray] (0,1) -- (2,1);
\draw[thin,gray] (0,2) -- (3,2);
\draw[line width=2pt] (0,0) -- (2,0) -- (2,1) -- (3,1) -- (3,3) -- (4,3);
\draw[fill=black] (.5,1.5) circle (.1);
\draw[fill=black] (1.5,1.5) circle (.1);
\draw[fill=black] (2.5,1.5) circle (.1);
\draw[fill=black] (1.5,2.5) circle (.1);
\draw[fill=black] (1.5,.5) circle (.1);

\draw[fill=black] (.5,0) circle (.1);
\draw (.5,-.25) node {\scriptsize 7};
\draw[fill=black] (1.5,0) circle (.1);
\draw (1.5,-.25) node {\scriptsize 6};
\draw[fill=black] (2,.5) circle (.1);
\draw (2.25,.5) node {\scriptsize 5};
\draw[fill=black] (2.5,1) circle (.1);
\draw (2.5,.75) node {\scriptsize 4};
\draw[fill=black] (3,1.5) circle (.1);
\draw (3.25,1.5) node {\scriptsize 3};
\draw[fill=black] (3,2.5) circle (.1);
\draw (3.25,2.5) node {\scriptsize 2};
\draw[fill=black] (3.5,3) circle (.1);
\draw (3.5,2.75) node {\scriptsize 1};

\draw[->,line width=.75pt] (3,2.5) -- (1.6,2.5);
\draw[->,line width=.75pt] (3,1.5) -- (2.6,1.5);
\draw[->,line width=.75pt] (2.5,1.5) -- (1.6,1.5);
\draw[->,line width=.75pt] (1.5,1.5) -- (.6,1.5);
\draw[->,line width=.75pt] (2,.5) -- (1.6,.5);

\draw[->,line width=.75pt] (1.5,2.5) -- (1.5,1.6);
\draw[->,line width=.75pt] (1.5,1.5) -- (1.5,.6);
\draw[->,line width=.75pt] (1.5,.5) -- (1.5,.1);
\draw[->,line width=.75pt] (.5,1.5) -- (.5,.1);
\draw[->,line width=.75pt] (2.5,1.5) -- (2.5,1.1);
\end{tikzpicture}
\end{displaymath}
\caption{The \protect\Le-graph built from the \protect\Le-diagram in Figure \ref{fig:diagram_example}.}
\label{fig:le_graph}
\end{figure}
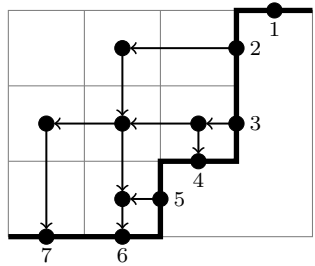

The following characterization of positroids in terms of their basis is given in \cite{ardila:positroids}. It is equivalent to all of the definitions of positroids found in \cite{postnikov:positive_grassmannian}.

\begin{definition} \label{def:positroid}
Let $B$ be the set of nodes labelling vertical steps of the boundary path of a \Le-graph. Let $\calP \subseteq \binom{[n]}{r}$ consist of $B$ together all sets $I \in \binom{[n]}{r}$ such that there exists an edge disjoint walk from $B \setminus I$ to $I \setminus B$ in the \Le-graph. This collection of subsets is a {\it{positroid}}, which is a type of matroid. All positroids may be realized uniquely in this fashion.
\end{definition}

For example, the positroid built from the \Le-graph in Figure \ref{fig:le_graph} is
\begin{displaymath}
\calP = \{235, 236, 245, 246, 256, 257, 267, 356, 357, 367, 456, 457, 467\}.
\end{displaymath}

\subsection{The Rayleigh property} \label{sec:rayleigh}

A graph with positive weighted edges may be viewed as an electrical network with the edge weights corresponding to electrical conductances of a wire. Given any two vertices $a,b$ of the graph, one may measure the conductance of the network as a whole from $a$ to $b$. Rayleigh's monotonicity law says that if the conductance of any edge in the network is increased, the conductance of the network from $a$ to $b$ will not decrease. Choe and Wagner generalized this physical property to matroids in \cite{choe:rayleigh_matroids}.

\begin{definition}
A matroid $\calM$ has the {\it{Rayleigh property}} if for all $e,f \in [n]$ and all $\mathbf{a} = (a_1, a_2, \dots, a_n) \in \rr_{\geq 0}^{n}$,
\begin{equation} \label{eqn:rayleigh}
M_{ef}(\mathbf{a}) M^{ef}(\mathbf{a})
\leq M^{f}_{e}(\mathbf{a}) M^{e}_{f}(\mathbf{a}).
\end{equation}
\end{definition}

Viewing the positive weight vector $\mathbf{a}$ as a probability distribution on the ground set $[n]$, we may view the polynomial $M(\mathbf{a})$ as giving a probability distribution on the bases of the matroid $\calM$. Under this interpretation, the inequality (\ref{eqn:rayleigh}) says that no matter which probability distribution is placed on the ground set, the events that $e$ is in a randomly chosen basis and that $f$ is in a randomly chosen basis are anti-correlated.

The Rayleigh property is related to several other matroidal properties, some of which are mentioned in Section \ref{sec:other_classes}.

\section{Proof of Theorem \ref{thm:rayleigh}} \label{sec:proof}

To prove that (\ref{eqn:rayleigh}) holds for a positroid $\calP$, it suffices to find for each $e,f \in [n]$ an injection
\begin{displaymath}
\phi: \calP_{ef} \times \calP^{ef} \to \calP^{f}_{e} \times \calP^{e}_{f}
\end{displaymath}
\noindent
such that for all $(B_1,B_2) \in \calP_{ef} \times \calP^{ef}$, $\mathbf{x}^{B_1} \mathbf{x}^{B_2} = \mathbf{x}^{B'_1} \mathbf{x}^{B'_2}$, where $(B'_1,B'_2) = \phi(B_1,B_2)$. This section describes an algorithm giving such an injection.

An element $(B_1,B_2) \in \calP_{ef} \times \calP^{ef}$ may be represented as two sets of vertex disjoint paths in the \Le-graph defining $\calP$. Note that while each set of paths is disjoint, it is possible paths from the first set intersect paths from the second. We color the collection of paths representing $B_1$ blue and the collection representing $B_2$ green. Suppose $e$ labels a horizontal edge of the boundary of the \Le-digaram. Then, since $e \in B_1$ and $e \notin B_2$, there is a blue path, but not a green path terminating at $e$. Likewise, if $e$ labels a vertical edge of the boundary of the \Le-diagram, there is a green path, but not a blue path originating at $e$. Similar statements may be made about $f$.

Algorithm \ref{alg:injection} works by placing a marker on the vertex $f$. This marker walks along the edges of the \Le-graph in a prescribed fashion, switching the colors of edges as it traverses them. The algorithm returns a new coloring of the edges of the \Le-graph, which represents a new pair of bases of the positroid.

\begin{algorithm} \label{alg:injection}
Let $\calP$ positroid presented as a \Le-diagram, for each $B \in \calP$ choose a collection of vertex disjoint paths $P$ in the \Le-graph representing $B$.

\begin{algorithmic}
\State {\bf{input:}} $e,f \in [n]$, $(B_1,B_2) \in \calP_{ef} \times \calP^{ef}$. \vskip.1cm
\State Color the collection of paths representing $B_1$ blue. 
\State Color the collection of paths representing $B_2$ green.
\State Place a marker on the vertex labeled $f$.
\State Color the marker the same color as the unique colored path incident to $f$. \vskip.1cm
 
\Loop \vskip.1cm
\If {The marker is blue,} move the marker along the blue edge pointing into the marker. If there is more than one such edge, use the edge the marker did not use to enter the node it is located. \vskip.1cm

\If {The marker is incident to both a blue path and a green path,} color the marker green.
\EndIf \vskip.1cm

\State Recolor the traversed edge green. \vskip.1cm

\If {The marker is on the boundary of the \Le-diagram,} {\bf{stop}}.
\EndIf \vskip.1cm

\EndIf \vskip.1cm

\If {The marker is green,} move the marker along the green edge pointing away from the marker. If there is more than one such edge, use the edge the marker did not use to enter the node it is located. \vskip.1cm

\If {The marker is incident to both a blue path and a green path,} color the marker blue. \vskip.1cm
\EndIf

\State Recolor the traversed edge blue. \vskip.1cm

\If {The marker is on the boundary of the \Le-diagram,} {\bf{stop}}.
\EndIf
\EndIf
\EndLoop
\vskip.1cm
\noindent
{\bf{end loop}}
\vskip.1cm
\State {\bf{output:}} $(B'_1,B'_2) \in \calP^{f}_{e} \times \calP^{e}_{f}$, where $B'_1$ is given by the new collection of blue paths and $B'_2$ is given by the new collection of green paths.
\end{algorithmic}
\end{algorithm}

Figure \ref{fig:algorithm} provides an example of Algorithm \ref{alg:injection}.

\begin{figure}[h]
\begin{displaymath}
\begin{tikzpicture}[scale=.95]
\draw[thin,gray] (0,0) -- (4,0) -- (4,3) -- (0,3) -- (0,0);
\draw[thin,gray] (1,0) -- (1,3);
\draw[thin,gray] (2,1) -- (2,3);
\draw[thin,gray] (0,1) -- (2,1);
\draw[thin,gray] (0,2) -- (3,2);
\draw[line width=2pt] (0,0) -- (2,0) -- (2,1) -- (3,1) -- (3,3) -- (4,3);
\draw[fill=black] (.5,1.5) circle (.1);
\draw[fill=black] (1.5,1.5) circle (.1);
\draw[fill=black] (2.5,1.5) circle (.1);
\draw[fill=black] (1.5,2.5) circle (.1);
\draw[fill=black] (1.5,.5) circle (.1);

\draw[blue,fill=blue] (.5,0) circle (.2);
\draw (.75,-.25) node {\scriptsize 7};
\draw[fill=black] (1.5,0) circle (.1);
\draw (1.5,-.25) node {\scriptsize 6};
\draw[fill=black] (2,.5) circle (.1);
\draw (2.25,.5) node {\scriptsize 5};
\draw[fill=black] (2.5,1) circle (.1);
\draw (2.5,.75) node {\scriptsize 4};
\draw[fill=black] (3,1.5) circle (.1);
\draw (3.25,1.5) node {\scriptsize 3};
\draw[fill=black] (3,2.5) circle (.1);
\draw (3.25,2.5) node {\scriptsize 2};
\draw[fill=black] (3.5,3) circle (.1);
\draw (3.5,2.75) node {\scriptsize 1};

\draw[->,line width=.75pt] (3,2.5) -- (1.6,2.5);
\draw[->,line width=.75pt] (3,1.5) -- (2.6,1.5);
\draw[->,line width=.75pt] (2.5,1.5) -- (1.6,1.5);
\draw[->,line width=.75pt] (1.5,1.5) -- (.6,1.5);
\draw[->,line width=.75pt] (2,.5) -- (1.6,.5);

\draw[->,line width=.75pt] (1.5,2.5) -- (1.5,1.6);
\draw[->,line width=.75pt] (1.5,1.5) -- (1.5,.6);
\draw[->,line width=.75pt] (1.5,.5) -- (1.5,.1);
\draw[->,line width=.75pt] (.5,1.5) -- (.5,.2);
\draw[->,line width=.75pt] (2.5,1.5) -- (2.5,1.1);

\draw[blue, dash pattern = on 1pt off 1.5pt, line width = 7pt] (.5,.3) -- (.5,1.4);
\draw[blue, dash pattern = on 1pt off 1.5pt, line width = 7pt] (.7,1.5) -- (1.4,1.5);
\draw[blue, dash pattern = on 1pt off 1.5pt, line width = 7pt] (1.7,1.5) -- (2.4,1.5);
\draw[blue, dash pattern = on 1pt off 1.5pt, line width = 7pt] (2.7,1.5) -- (2.9,1.5);

\draw[blue, dash pattern = on 1pt off 1.5pt, line width = 7pt] (1.5,.2) -- (1.5,.4);
\draw[blue, dash pattern = on 1pt off 1.5pt, line width = 7pt] (1.7,.5) -- (1.9,.5);

\draw[green, dash pattern = on 1pt off 1.5pt, line width = 7pt, dash phase = 1.25pt] (1.5,.2) -- (1.5,.4);
\draw[green, dash pattern = on 1pt off 1.5pt, line width = 7pt] (1.5,.7) -- (1.5,1.4);
\draw[green, dash pattern = on 1pt off 1.5pt, line width = 7pt] (1.5,1.7) -- (1.5,2.4);
\draw[green, dash pattern = on 1pt off 1.5pt, line width = 7pt] (1.7,2.5) -- (2.9,2.5);

\begin{scope}[shift={(6,0)}]
\draw[thin,gray] (0,0) -- (4,0) -- (4,3) -- (0,3) -- (0,0);
\draw[thin,gray] (1,0) -- (1,3);
\draw[thin,gray] (2,1) -- (2,3);
\draw[thin,gray] (0,1) -- (2,1);
\draw[thin,gray] (0,2) -- (3,2);
\draw[line width=2pt] (0,0) -- (2,0) -- (2,1) -- (3,1) -- (3,3) -- (4,3);
%\draw[fill=black] (.5,1.5) circle (.1);
\draw[fill=black] (1.5,1.5) circle (.1);
\draw[fill=black] (2.5,1.5) circle (.1);
\draw[fill=black] (1.5,2.5) circle (.1);
\draw[fill=black] (1.5,.5) circle (.1);

\draw[fill=black] (.5,0) circle (.1);
\draw (.5,-.25) node {\scriptsize 7};
\draw[fill=black] (1.5,0) circle (.1);
\draw (1.5,-.25) node {\scriptsize 6};
\draw[fill=black] (2,.5) circle (.1);
\draw (2.25,.5) node {\scriptsize 5};
\draw[fill=black] (2.5,1) circle (.1);
\draw (2.5,.75) node {\scriptsize 4};
\draw[fill=black] (3,1.5) circle (.1);
\draw (3.25,1.5) node {\scriptsize 3};
\draw[fill=black] (3,2.5) circle (.1);
\draw (3.25,2.5) node {\scriptsize 2};
\draw[fill=black] (3.5,3) circle (.1);
\draw (3.5,2.75) node {\scriptsize 1};

\draw[->,line width=.75pt] (3,2.5) -- (1.6,2.5);
\draw[->,line width=.75pt] (3,1.5) -- (2.6,1.5);
\draw[->,line width=.75pt] (2.5,1.5) -- (1.6,1.5);
\draw[->,line width=.75pt] (1.5,1.5) -- (.7,1.5);
\draw[->,line width=.75pt] (2,.5) -- (1.6,.5);

\draw[->,line width=.75pt] (1.5,2.5) -- (1.5,1.6);
\draw[->,line width=.75pt] (1.5,1.5) -- (1.5,.6);
\draw[->,line width=.75pt] (1.5,.5) -- (1.5,.1);
\draw[->,line width=.75pt] (.5,1.4) -- (.5,.1);
\draw[->,line width=.75pt] (2.5,1.5) -- (2.5,1.1);

\draw[green, dash pattern = on 1pt off 1.5pt, line width = 7pt] (.5,.2) -- (.5,1.3);
\draw[blue, dash pattern = on 1pt off 1.5pt, line width = 7pt] (.8,1.5) -- (1.4,1.5);
\draw[blue, dash pattern = on 1pt off 1.5pt, line width = 7pt] (1.7,1.5) -- (2.4,1.5);
\draw[blue, dash pattern = on 1pt off 1.5pt, line width = 7pt] (2.7,1.5) -- (2.9,1.5);

\draw[blue, dash pattern = on 1pt off 1.5pt, line width = 7pt] (1.5,.2) -- (1.5,.4);
\draw[blue, dash pattern = on 1pt off 1.5pt, line width = 7pt] (1.7,.5) -- (1.9,.5);

\draw[green, dash pattern = on 1pt off 1.5pt, line width = 7pt, dash phase = 1.25pt] (1.5,.2) -- (1.5,.4);
\draw[green, dash pattern = on 1pt off 1.5pt, line width = 7pt] (1.5,.7) -- (1.5,1.4);
\draw[green, dash pattern = on 1pt off 1.5pt, line width = 7pt] (1.5,1.7) -- (1.5,2.4);
\draw[green, dash pattern = on 1pt off 1.5pt, line width = 7pt] (1.7,2.5) -- (2.9,2.5);
%\fill[pattern=north east lines, pattern color=green] (.45,.2) rectangle (.55,1.35);
%\fill[pattern=north west lines, pattern color=red] (.45,.2) rectangle (.55,1.35);
\draw[blue,fill=blue]  (.5,1.5) circle (.2);
\end{scope}

\begin{scope}[shift={(12,0)}]
\draw[thin,gray] (0,0) -- (4,0) -- (4,3) -- (0,3) -- (0,0);
\draw[thin,gray] (1,0) -- (1,3);
\draw[thin,gray] (2,1) -- (2,3);
\draw[thin,gray] (0,1) -- (2,1);
\draw[thin,gray] (0,2) -- (3,2);
\draw[line width=2pt] (0,0) -- (2,0) -- (2,1) -- (3,1) -- (3,3) -- (4,3);
\draw[fill=black] (.5,1.5) circle (.1);
%\draw[fill=black] (1.5,1.5) circle (.1);
\draw[fill=black] (2.5,1.5) circle (.1);
\draw[fill=black] (1.5,2.5) circle (.1);
\draw[fill=black] (1.5,.5) circle (.1);

\draw[fill=black] (.5,0) circle (.1);
\draw (.5,-.25) node {\scriptsize 7};
\draw[fill=black] (1.5,0) circle (.1);
\draw (1.5,-.25) node {\scriptsize 6};
\draw[fill=black] (2,.5) circle (.1);
\draw (2.25,.5) node {\scriptsize 5};
\draw[fill=black] (2.5,1) circle (.1);
\draw (2.5,.75) node {\scriptsize 4};
\draw[fill=black] (3,1.5) circle (.1);
\draw (3.25,1.5) node {\scriptsize 3};
\draw[fill=black] (3,2.5) circle (.1);
\draw (3.25,2.5) node {\scriptsize 2};
\draw[fill=black] (3.5,3) circle (.1);
\draw (3.5,2.75) node {\scriptsize 1};

\draw[->,line width=.75pt] (3,2.5) -- (1.6,2.5);
\draw[->,line width=.75pt] (3,1.5) -- (2.6,1.5);
\draw[->,line width=.75pt] (2.5,1.5) -- (1.7,1.5);
\draw[->,line width=.75pt] (1.4,1.5) -- (.6,1.5);
\draw[->,line width=.75pt] (2,.5) -- (1.6,.5);

\draw[->,line width=.75pt] (1.5,2.5) -- (1.5,1.7);
\draw[->,line width=.75pt] (1.5,1.4) -- (1.5,.6);
\draw[->,line width=.75pt] (1.5,.5) -- (1.5,.1);
\draw[->,line width=.75pt] (.5,1.5) -- (.5,.1);
\draw[->,line width=.75pt] (2.5,1.5) -- (2.5,1.1);

\draw[green, dash pattern = on 1pt off 1.5pt, line width = 7pt] (.5,.2) -- (.5,1.4);
\draw[green, dash pattern = on 1pt off 1.5pt, line width = 7pt] (.7,1.5) -- (1.3,1.5);
\draw[blue, dash pattern = on 1pt off 1.5pt, line width = 7pt] (1.8,1.5) -- (2.4,1.5);
\draw[blue, dash pattern = on 1pt off 1.5pt, line width = 7pt] (2.7,1.5) -- (2.9,1.5);

\draw[blue, dash pattern = on 1pt off 1.5pt, line width = 7pt] (1.5,.2) -- (1.5,.4);
\draw[blue, dash pattern = on 1pt off 1.5pt, line width = 7pt] (1.7,.5) -- (1.9,.5);

\draw[green, dash pattern = on 1pt off 1.5pt, line width = 7pt, dash phase = 1.25pt] (1.5,.2) -- (1.5,.4);
\draw[green, dash pattern = on 1pt off 1.5pt, line width = 7pt] (1.5,.7) -- (1.5,1.3);
\draw[green, dash pattern = on 1pt off 1.5pt, line width = 7pt] (1.5,1.8) -- (1.5,2.4);
\draw[green, dash pattern = on 1pt off 1.5pt, line width = 7pt] (1.7,2.5) -- (2.9,2.5);
%\fill[pattern=north east lines, pattern color=green] (.45,.2) rectangle (.55,1.35);
%\fill[pattern=north west lines, pattern color=red] (.45,.2) rectangle (.55,1.35);
\draw[green,fill=green]  (1.5,1.5) circle (.2);
\end{scope}

\begin{scope}[shift={(3,-4)}]
\draw[thin,gray] (0,0) -- (4,0) -- (4,3) -- (0,3) -- (0,0);
\draw[thin,gray] (1,0) -- (1,3);
\draw[thin,gray] (2,1) -- (2,3);
\draw[thin,gray] (0,1) -- (2,1);
\draw[thin,gray] (0,2) -- (3,2);
\draw[line width=2pt] (0,0) -- (2,0) -- (2,1) -- (3,1) -- (3,3) -- (4,3);
\draw[fill=black] (.5,1.5) circle (.1);
\draw[fill=black] (1.5,1.5) circle (.1);
\draw[fill=black] (2.5,1.5) circle (.1);
\draw[fill=black] (1.5,2.5) circle (.1);
%\draw[fill=black] (1.5,.5) circle (.1);

\draw[fill=black] (.5,0) circle (.1);
\draw (.5,-.25) node {\scriptsize 7};
\draw[fill=black] (1.5,0) circle (.1);
\draw (1.5,-.25) node {\scriptsize 6};
\draw[fill=black] (2,.5) circle (.1);
\draw (2.25,.5) node {\scriptsize 5};
\draw[fill=black] (2.5,1) circle (.1);
\draw (2.5,.75) node {\scriptsize 4};
\draw[fill=black] (3,1.5) circle (.1);
\draw (3.25,1.5) node {\scriptsize 3};
\draw[fill=black] (3,2.5) circle (.1);
\draw (3.25,2.5) node {\scriptsize 2};
\draw[fill=black] (3.5,3) circle (.1);
\draw (3.5,2.75) node {\scriptsize 1};

\draw[->,line width=.75pt] (3,2.5) -- (1.6,2.5);
\draw[->,line width=.75pt] (3,1.5) -- (2.6,1.5);
\draw[->,line width=.75pt] (2.5,1.5) -- (1.6,1.5);
\draw[->,line width=.75pt] (1.5,1.5) -- (.6,1.5);
\draw[->,line width=.75pt] (2,.5) -- (1.7,.5);

\draw[->,line width=.75pt] (1.5,2.5) -- (1.5,1.6);
\draw[->,line width=.75pt] (1.5,1.5) -- (1.5,.7);
\draw[->,line width=.75pt] (1.5,.5) -- (1.5,.1);
\draw[->,line width=.75pt] (.5,1.5) -- (.5,.1);
\draw[->,line width=.75pt] (2.5,1.5) -- (2.5,1.1);

\draw[green, dash pattern = on 1pt off 1.5pt, line width = 7pt] (.5,.2) -- (.5,1.4);
\draw[green, dash pattern = on 1pt off 1.5pt, line width = 7pt] (.7,1.5) -- (1.4,1.5);
\draw[blue, dash pattern = on 1pt off 1.5pt, line width = 7pt] (1.7,1.5) -- (2.4,1.5);
\draw[blue, dash pattern = on 1pt off 1.5pt, line width = 7pt] (2.7,1.5) -- (2.9,1.5);

\draw[blue, dash pattern = on 1pt off 1.5pt, line width = 7pt] (1.5,.2) -- (1.5,.3);
\draw[blue, dash pattern = on 1pt off 1.5pt, line width = 7pt] (1.8,.5) -- (1.9,.5);

\draw[green, dash pattern = on 1pt off 1.5pt, line width = 7pt, dash phase = 1.25pt] (1.5,.2) -- (1.5,.3);
\draw[blue, dash pattern = on 1pt off 1.5pt, line width = 7pt] (1.5,.8) -- (1.5,1.4);
\draw[green, dash pattern = on 1pt off 1.5pt, line width = 7pt] (1.5,1.7) -- (1.5,2.4);
\draw[green, dash pattern = on 1pt off 1.5pt, line width = 7pt] (1.7,2.5) -- (2.9,2.5);
%\fill[pattern=north east lines, pattern color=green] (.45,.2) rectangle (.55,1.35);
%\fill[pattern=north west lines, pattern color=red] (.45,.2) rectangle (.55,1.35);
\draw[blue,fill=blue]  (1.5,.5) circle (.2);
\end{scope}

\begin{scope}[shift={(9,-4)}]
\draw[thin,gray] (0,0) -- (4,0) -- (4,3) -- (0,3) -- (0,0);
\draw[thin,gray] (1,0) -- (1,3);
\draw[thin,gray] (2,1) -- (2,3);
\draw[thin,gray] (0,1) -- (2,1);
\draw[thin,gray] (0,2) -- (3,2);
\draw[line width=2pt] (0,0) -- (2,0) -- (2,1) -- (3,1) -- (3,3) -- (4,3);
\draw[fill=black] (.5,1.5) circle (.1);
\draw[fill=black] (1.5,1.5) circle (.1);
\draw[fill=black] (2.5,1.5) circle (.1);
\draw[fill=black] (1.5,2.5) circle (.1);
\draw[fill=black] (1.5,.5) circle (.1);

\draw[fill=black] (.5,0) circle (.1);
\draw (.5,-.25) node {\scriptsize 7};
\draw[fill=black] (1.5,0) circle (.1);
\draw (1.5,-.25) node {\scriptsize 6};
\draw[fill=black] (2,.5) circle (.1);
\draw (2.25,.25) node {\scriptsize 5};
\draw[fill=black] (2.5,1) circle (.1);
\draw (2.5,.75) node {\scriptsize 4};
\draw[fill=black] (3,1.5) circle (.1);
\draw (3.25,1.5) node {\scriptsize 3};
\draw[fill=black] (3,2.5) circle (.1);
\draw (3.25,2.5) node {\scriptsize 2};
\draw[fill=black] (3.5,3) circle (.1);
\draw (3.5,2.75) node {\scriptsize 1};

\draw[->,line width=.75pt] (3,2.5) -- (1.6,2.5);
\draw[->,line width=.75pt] (3,1.5) -- (2.6,1.5);
\draw[->,line width=.75pt] (2.5,1.5) -- (1.6,1.5);
\draw[->,line width=.75pt] (1.5,1.5) -- (.6,1.5);
\draw[->,line width=.75pt] (2,.5) -- (1.6,.5);

\draw[->,line width=.75pt] (1.5,2.5) -- (1.5,1.6);
\draw[->,line width=.75pt] (1.5,1.5) -- (1.5,.6);
\draw[->,line width=.75pt] (1.5,.5) -- (1.5,.1);
\draw[->,line width=.75pt] (.5,1.5) -- (.5,.1);
\draw[->,line width=.75pt] (2.5,1.5) -- (2.5,1.1);

\draw[green, dash pattern = on 1pt off 1.5pt, line width = 7pt] (.5,.2) -- (.5,1.4);
\draw[green, dash pattern = on 1pt off 1.5pt, line width = 7pt] (.7,1.5) -- (1.4,1.5);
\draw[blue, dash pattern = on 1pt off 1.5pt, line width = 7pt] (1.7,1.5) -- (2.4,1.5);
\draw[blue, dash pattern = on 1pt off 1.5pt, line width = 7pt] (2.7,1.5) -- (2.9,1.5);

\draw[blue, dash pattern = on 1pt off 1.5pt, line width = 7pt] (1.5,.2) -- (1.5,.4);
\draw[green, dash pattern = on 1pt off 1.5pt, line width = 7pt] (1.7,.5) -- (1.8,.5);

\draw[green, dash pattern = on 1pt off 1.5pt, line width = 7pt, dash phase = 1.25pt] (1.5,.2) -- (1.5,.4);
\draw[blue, dash pattern = on 1pt off 1.5pt, line width = 7pt] (1.5,.7) -- (1.5,1.4);
\draw[green, dash pattern = on 1pt off 1.5pt, line width = 7pt] (1.5,1.7) -- (1.5,2.4);
\draw[green, dash pattern = on 1pt off 1.5pt, line width = 7pt] (1.7,2.5) -- (2.9,2.5);
%\fill[pattern=north east lines, pattern color=green] (.45,.2) rectangle (.55,1.35);
%\fill[pattern=north west lines, pattern color=red] (.45,.2) rectangle (.55,1.35);
\draw[blue,fill=blue]  (2,.5) circle (.2);
\end{scope}

\draw[->,line width=1pt] (4.5,1.5) -- (5.5,1.5);
\draw[->,line width=1pt] (10.5,1.5) -- (11.5,1.5);
\draw[->,line width=1pt] (1.5,-2.5) -- (2.5,-2.5);
\draw[->,line width=1pt] (7.5,-2.5) -- (8.5,-2.5);
\end{tikzpicture}
\end{displaymath}
\caption{An example of Algorithm \ref{alg:injection}. In this example, $e = 2$, $f =7$, and $(B_1,B_2) = (267,356)$. The algorithm returns the pair $(256,367)$.}
\label{fig:algorithm}
\end{figure}
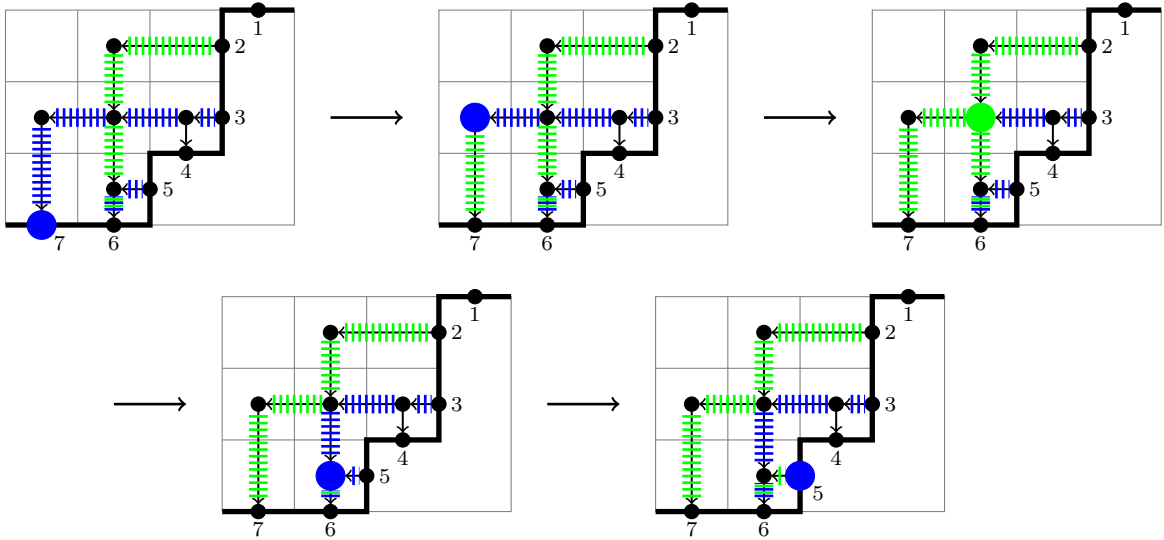

\begin{lemma} \label{lem:reversible}
Every step of Algorithm \ref{alg:injection} is reversible. 
\end{lemma}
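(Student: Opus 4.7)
My plan is to show each step of Algorithm~\ref{alg:injection} admits a unique inverse. The state at each moment consists of the marker position $v$, its color $c$, the edge coloring $E$, and implicitly the last-traversed edge $e$ (which is what the ``edge the marker did not use to enter'' rule references). I will show that from this state one can uniquely recover the state just before.

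The key invariant I plan to maintain is that at every vertex of the \Le-graph other than the marker, the colored edges form a valid \emph{pass-through} structure: for each of blue and green, either no edges of that color are incident, or exactly one in-edge and one out-edge of that color are incident. This holds initially because the path collections representing $B_1$ and $B_2$ are vertex-disjoint within each color. Each step of the algorithm recolors a single edge $e$ and moves the marker from $v'$ to $v$; I verify that after the step, the invariant is restored at $v'$ (it becomes a valid non-marker vertex). To invert a step, first identify the type of the previous step from the direction of $e$ at $v$: if $e$ is an out-edge of $v$, the previous step was a forward blue step (marker moved against an arrow, recoloring a blue edge to green), and if $e$ is an in-edge of $v$, it was a forward green step. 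Reverse $e$'s recoloring in $E$ to obtain $E'$; the previous marker color $c'$ equals $c$ unless $v$ is a crossing of the two path systems in $E'$, in which case $c'$ equals the opposite of $c$. The previous marker position is the other endpoint of $e$. This recovers $(v', c', E')$, and the component $e'$ of the preceding state will be recovered as the edge used in the next reverse step.

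The main obstacle will be the invariant-preservation check, which requires a finite case analysis at $v$ and $v'$ depending on whether $v$ was originally incident to no path, one path, or both paths (a crossing), and on whether the previous step was forward blue or forward green. Each case produces a specific local configuration at the new marker $v$ whose un-recoloring restores the invariant; the analysis is kept bounded by the structural fact that each \Le-graph vertex has at most one in-edge and one out-edge in each of the horizontal and vertical orientations, so only finitely many configurations arise, and the pass-through directional rule in Algorithm~\ref{alg:injection} forces orientations of consecutive traversals to alternate in a way that pins down the traversed edge in the ambiguous cases where two candidate edges share the recolored color at $v$.
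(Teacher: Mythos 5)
Your proposal takes a genuinely different route from the paper. The paper reverses the algorithm \emph{globally}: it re-runs the very same marker walk starting again at the node $f$ (whose location is known), only with the traversal conventions of the two colors swapped --- green edges are traversed against their flow and blue edges with their flow --- so that the walk is retraced from its beginning and each recoloring is undone in the original order. You instead invert the walk \emph{step by step from its end}, recovering each pre-step state from the post-step state. Your local analysis is sound in its essentials: a blue step deposits the marker at the tail of the traversed edge and a green step at its head, so the orientation of the traversed edge at the new marker position does determine the step type; un-recoloring together with the crossing test in $E'$ does recover the previous marker color; and the ``edge the marker did not use to enter'' rule does resolve the two-candidate ambiguity at crossings once the next-in-reverse-order traversed edge is in hand. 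The pass-through invariant is the right bookkeeping device, though as stated it fails at boundary vertices that are endpoints of paths (a source carries only an out-edge of its color, a sink only an in-edge); it should be asserted only for vertices interior to the \Le-graph.

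The substantive gap is in how your chain of step-inversions gets anchored, and it matters because Lemma \ref{lem:reversible} is invoked in the proof of Theorem \ref{thm:rayleigh} to conclude that $\phi$ is injective. Your reversal must begin at the final state of the walk, which includes the terminal boundary vertex and the last-traversed edge; but the output of Algorithm \ref{alg:injection} is only the recolored diagram, i.e.\ the pair $(B_1',B_2')$, and neither the terminal vertex nor the last-traversed edge is visibly recoverable from that data together with $e$ and $f$ (a priori several boundary vertices could have served as the walk's endpoint). Step-reversibility in your sense therefore shows that the map on \emph{full machine states} is injective, which suffices for the termination argument in Lemma \ref{lem:not_f}, but it does not yet show that the induced map on pairs of bases is injective. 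The paper's procedure avoids this entirely by anchoring the reversal at $f$, which is given, and letting the swapped-direction walk deterministically reconstruct the original coloring. To complete your argument you would need either to show that the terminal vertex (necessarily of degree one, hence with unambiguous last edge) is determined by $(B_1',B_2')$ together with $e$ and $f$, or to reorganize the reversal so that it starts from $f$ as the paper does.
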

\begin{proof}
To reverse Algorithm \ref{alg:injection}, preform the exact same procedure (placing a marker on node $f$, coloring it the same color as the edge incident to $f$, etc.) only this time traverse green edges against their flow and blue edges with their flow. It is easily seen that this procedure reverses any individual step of Algorithm \ref{alg:injection}.
\end{proof}

While the reversing procedure of Lemma \ref{lem:reversible} can be applied to any element of $\calP^f_e \times \calP^e_f$ it will not always yield an element of $\calP_{ef} \times \calP^{ef}$. For instance, applying the reversing procedure to a configuration that contains a green path from $e$ to $f$ which does not cross any blue paths results in an element of  $\calP^e_f \times \calP^f_e$, not an element of $\calP_{ef} \times \calP^{ef}$.

\begin{lemma} \label{lem:not_f}
In Algorithm \ref{alg:injection}, the marker cannot enter the vertices $e$ or $f$. In particular, Algorithm \ref{alg:injection} terminates. 
\end{lemma}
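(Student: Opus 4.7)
I would rule out $e$ by a direct color-and-direction invariant, and rule out re-entry at $f$ by contradiction from Lemma \ref{lem:reversible}; termination then follows from finiteness and reversibility. Throughout, let $\epsilon_v$ be the unique edge of the \Le-graph incident to a boundary vertex $v$, and $d_v$ its interior endpoint (a boundary vertex has exactly one incident edge whenever it carries a blue or green path, which $e$ and $f$ certainly do).

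For $e$ in the horizontal case, $\epsilon_e = d_e \to e$ is initially blue. I claim by induction on the step number that $\epsilon_e$ stays blue and the marker never reaches $e$. The color of $\epsilon_e$ can change only when it is traversed, and traversal requires the marker at one of its endpoints. At $d_e$, the blue-marker rule calls for an incoming blue edge, but $\epsilon_e$ is outgoing from $d_e$; the green-marker rule calls for an outgoing green edge, but $\epsilon_e$, though outgoing, is blue. So $\epsilon_e$ is never used, the invariant is preserved, and the marker does not visit $e$. The vertical case, with $\epsilon_e = e \to d_e$ initially green, is symmetric.

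For $f$, the edge $\epsilon_f$ really is flipped at step $1$, so the invariant approach fails and I would argue by contradiction. Suppose the marker re-enters $f$ at step $k > 0$, chosen minimal. Because $k$ is the first return, $\epsilon_f$ cannot be traversed strictly between steps $1$ and $k$---any such traversal requires the marker at $d_f$ or $f$, and would have sent it to $f$ earlier. Hence $\epsilon_f$ is flipped exactly twice and has returned to its original color in the terminal state $S_k$. The direction of $\epsilon_f$ then pins down the last move: in the horizontal case $\epsilon_f = d_f \to f$ is outgoing from $d_f$, so the marker at $d_f$ can only reach $f$ via a green step, leaving the marker green at $f$ and $\epsilon_f$ blue in $S_k$. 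Now I invoke Lemma \ref{lem:reversible}: the reverse procedure applied to $S_k$ must undo the last forward step. It places a marker at $f$ colored to match $\epsilon_f$ (blue), and seeks either a blue edge at $f$ to be traversed with flow, i.e.\ an outgoing blue edge, or a green edge to be traversed against flow, i.e.\ an incoming green edge. The unique incident edge $\epsilon_f$ is incoming and blue, matching neither role; so the reverse procedure is stuck at its very first step, a contradiction. The vertical case is dual (with $\epsilon_f$ green and the green-against-flow rule failing for want of an incoming green edge at $f$).

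\textbf{Main obstacle and termination.} The delicate part is the $f$ case, where the invariant strategy fails because $\epsilon_f$ genuinely changes color; the contradiction rests on the purely geometric fact that $f$ is a boundary vertex with only one incident edge, which obstructs the reverse procedure at its first step. Termination is then immediate: the state space of triples (marker position, marker color, edge coloring) is finite, and by Lemma \ref{lem:reversible} each state has a unique predecessor, so the forward sequence of states does not repeat and must halt at a boundary vertex---which, by the above, is neither $e$ nor $f$.
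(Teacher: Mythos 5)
Your proposal is correct and follows essentially the same route as the paper: the vertex $e$ is excluded because its unique incident edge has the wrong color/direction combination ever to be traversed toward $e$, re-entry at $f$ is excluded by showing the resulting terminal configuration would be irreversible, contradicting Lemma \ref{lem:reversible}, and termination follows from finiteness plus reversibility plus the impossibility of returning to $f$. Your write-up just makes explicit a few details the paper leaves implicit (the color-preservation induction at $e$, the first-return minimality at $f$, and the precise failure point of the reverse procedure).
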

\begin{proof}
Using the notation from Algorithm \ref{alg:injection}, let $(B_1,B_2) \in \calP_{ef} \times \calP^{ef}$ be represented a blue and green collection of vertex disjoint paths in a \Le-diagram. Suppose $e$ is on a horizontal edge of the boundary of the \Le-diagram. Then, the directed edge pointing into $e$ is blue. Since the marker only traverses blue edges against their flow, the marker will never enter the vertex $e$. If $e$ is on a vertical edge of the boundary of the \Le-diagram, the directed edge pointing into $e$ is green. Since the marker only traverses green edges in the direction of the edge, the marker will never enter the vertex $e$.

Suppose the marker enters the vertex $f$ after leaving it initially. If $f$ is on a horizontal edge, after entering $f$, $f$ will be incident to a single blue edge pointing into it. However, we cannot apply the reversing procedure of \ref{lem:reversible} to such a configuration, contradicting the fact that each step of Algorithm \ref{alg:injection} is reversible. We reach a similar contradiction if $f$ is on a vertical edge. Hence the marker cannot return to the vertex $f$.

Since each step of Algorithm \ref{alg:injection} is reversible, the algorithm must either terminate or the diagram must revisit its initial configuration. Since the marker cannot return to the vertex $f$, Algorithm \ref{alg:injection} terminates.
\end{proof}

\begin{lemma} \label{lem:the_right_map}
The result of Algorithm \ref{alg:injection} is two sets of vertex disjoint walks, representing an element of $\calP^f_e \times \calP^e_f$.
\end{lemma}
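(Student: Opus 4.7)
The plan is to show that as the marker progresses through the Le-graph, the configuration of blue and green edges remains a pair of vertex-disjoint walk systems away from the marker's current position, and then to verify that the endpoints of these walk systems at termination correspond to bases in $\calP^f_e$ and $\calP^e_f$.

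The central step is proving an invariant by induction on the steps of the algorithm: at every vertex $v$ distinct from the marker's current position, the blue edges incident to $v$ locally form part of a vertex-disjoint walk system (either zero blue edges, one blue in-edge paired with one blue out-edge, or a single blue edge incident to $v$ when $v$ is on the boundary), and the same holds for green. When the marker, currently blue at vertex $v$, moves backward along a blue in-edge to $v'$, that edge is swapped from blue to green and $v$ becomes marker-free; one must verify the invariant at $v$. The key algorithmic rule, that the marker uses the in-edge it did not enter $v$ by when multiple are available, ensures that if $v$ lies on both a blue walk and a green walk, the marker effectively switches to follow the other walk. This, together with the color-swap rule when the marker is incident to both colors, keeps each color's walks pairwise vertex-disjoint.

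The boundary analysis completes the argument. At $f$, the marker's first move swaps the color of the unique edge originally incident to $f$, converting $f$'s role from a $B_1$-endpoint to a $B_2$-endpoint, consistent with $f \notin B_1'$ and $f \in B_2'$. By Lemma \ref{lem:not_f}, the marker never reaches $e$, so the local picture at $e$ is undisturbed, yielding $e \in B_1'$ and $e \notin B_2'$. The marker terminates at some boundary vertex $g \neq e, f$, and the recoloring of the final traversed edge produces a new walk endpoint at $g$ of precisely the right color to balance the color swap at $f$, ensuring the blue and green walk counts remain consistent with the rank of $\calP$.

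The main obstacle is the local case analysis at an interior vertex $v$. Such a vertex has up to two in-edges and two out-edges, each possibly blue, green, or uncolored, and the marker may arrive blue or green, with or without a color swap. The Le-graph's restrictive structure (each interior vertex has at most one horizontal in-edge, one horizontal out-edge, one vertical in-edge, and one vertical out-edge) limits the number of genuinely distinct local configurations, so the enumeration, while tedious, reduces to a small number of cases that can each be checked directly against the algorithm's transition rules.
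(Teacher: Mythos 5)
Your proposal is correct and follows essentially the same route as the paper: the paper's proof is exactly your invariant, stated as the single observation that once the marker leaves an interior vertex the blue (resp.\ green) in-degree equals the blue (resp.\ green) out-degree and that no vertex ever acquires four edges of one color, followed by the same endpoint check at $e$ and $f$ via Lemma \ref{lem:not_f}. The one point worth adding explicitly is that the \Le-graph has no directed cycles (every edge points left or down), which is what upgrades ``balanced in- and out-degree at interior vertices'' to a union of vertex-disjoint \emph{paths} rather than paths and cycles --- your purely local invariant alone does not exclude monochromatic cycles.
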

\begin{proof}
Observe that after the marker leaves any vertex of the diagram which is not on the boundary path, the number of blue edges pointing into the vertex is the same as the number of blue edges pointing out of the vertex and the number of green edges pointing into the vertex is the same as the number of green edges pointing out of the vertex. Hence, the result of Algorithm \ref{alg:injection} is a collection of directed paths and cycles. Since the \Le-graph contains no directed cycles, the result of the algorithm is a collection of directed paths. Since the algorithm cannot cause a vertex to have four blue or four green vertices incident to it, Algorithm \ref{alg:injection} returns a collection of disjoint blue paths and a collection of disjoint green paths representing a pair $(B'_1,B'_2) \in \calP \times \calP$. From Lemma  \ref{lem:not_f}, Algorithm \ref{alg:injection} changes the color of the edge incident to $f$, but not the color of the edge incident to $e$. So, $(B'_1,B'_2) \in \calP^f_e \times \calP^e_f$.
\end{proof}

\begin{proof}[Proof of Theorem \ref{thm:rayleigh}]
Lemma \ref{lem:the_right_map} shows that Algorithm \ref{alg:injection} maps a pair of bases of the positroid $(B_1, B_2) \in \calP_{ef} \times \calP^{ef}$ to a pair of bases $(B'_1, B'_2) \in \calP^{f}_{e} \times \calP^{e}_{f}$. Lemma \ref{lem:reversible} shows that this map is injective. It is apparent that as multisets $B_1 \cup B_2 = B'_1 \cup B'_2$. Hence, ${\bf{x}}^{B_1}{\bf{x}}^{B_2} = {\bf{x}}^{B'_1} {\bf{x}}^{B'_2}$. By the comments at the beginning of this section, the existence of such an injection proves Theorem \ref{thm:rayleigh}.
\end{proof}

\section{Related classes of matroids} \label{sec:other_classes}

The class of Rayleigh matroids is closely related to several other interesting classes of matroids. We mention some of these classes and whether or not positroids are known to live in these classes.

A matroid $\calM$ is called {\it{balanced}} if for every minor $\calN$ of $\calM$,
\begin{displaymath}
|\calN_{ef} \times \calN^{ef}| \leq |\calN_{e}^f \times \calN_f^e|,
\end{displaymath}
\noindent
for all $e,f \in [n]$. Balanced matroids were introduced by Feder and Mihail in \cite{feder:balanced} in relation to a conjecture about the one-skeletons $\{0,1\}$-polytopes and in part motivated the introduction of Rayleigh matroids. The class of Rayleigh matroids is a subclass of the class of balanced matroids, though the converse is not true. So, we obtain the immediate corollary to Theorem \ref{thm:rayleigh}.

\begin{corollary}
Positroids are balanced matroids.
\end{corollary}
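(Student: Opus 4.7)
The plan is to derive this corollary essentially for free from Theorem~\ref{thm:rayleigh}, by combining it with two standard observations that require no machinery beyond what is already in the excerpt. First, I would note that the numerical balance inequality is nothing other than the Rayleigh inequality~(\ref{eqn:rayleigh}) specialized at the all-ones weight vector $\mathbf{a}=(1,1,\dots,1)$: for any matroid $\calN$, $N(\mathbf{1})=|\calN|$, and the analogous specializations of $N_{ef}$, $N^{ef}$, $N^{f}_{e}$, $N^{e}_{f}$ recover $|\calN_{ef}|$, $|\calN^{ef}|$, $|\calN^{f}_{e}|$, $|\calN^{e}_{f}|$. Hence the Rayleigh property of a matroid immediately implies the balance inequality for that matroid alone.

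The second step is to promote this from $\calM$ to every minor of $\calM$, which is where we leave pure Theorem~\ref{thm:rayleigh} territory. For this I would invoke the well-known closure of positroids under minors: for any positroid $\calP$ and disjoint $I,J\subseteq[n]$, the minor $\calP/I\setminus J$ is again a positroid on the ground set $[n]\setminus(I\cup J)$. This closure property is standard and can be read off of any of the usual encodings of positroids (decorated permutations, Grassmann necklaces, or the \Le-diagram picture used in Section~\ref{sec:positroids}); see \cite{postnikov:positive_grassmannian, ardila:positroids}. Given minor-closure, every minor $\calN$ of a positroid $\calM$ is itself a positroid, so Theorem~\ref{thm:rayleigh} applied to $\calN$ yields the Rayleigh inequality for $\calN$, and then the first step, applied to $\calN$ in place of $\calM$, gives the corresponding balance inequality.

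One should also dispatch the boundary cases where $e$ or $f$ lies in the contracted or deleted set of the minor $\calN$; in each such case one of $\calN_{ef}$, $\calN^{ef}$, $\calN^{f}_{e}$, $\calN^{e}_{f}$ is empty and both sides of the inequality vanish, making the claim trivial. I do not foresee a genuine main obstacle here: the minor-closure of the positroid class is already in the literature and the specialization $\mathbf{a}\mapsto\mathbf{1}$ is routine, so past these two ingredients the statement is a direct corollary of Theorem~\ref{thm:rayleigh}. If one wished to avoid citing positroid minor-closure, an alternative route would be to prove the minor-closure of the Rayleigh class itself by specializing variables to $0$ or sending them to $\infty$ in (\ref{eqn:rayleigh}); that would be the only slightly nontrivial calculation, but within this paper the positroid route is cleaner.
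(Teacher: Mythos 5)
Your proposal is correct, and it reaches the same destination as the paper but by a slightly different road. The paper simply invokes the general containment, due to Choe and Wagner, that the class of Rayleigh matroids is a subclass of the class of balanced matroids, and then applies Theorem~\ref{thm:rayleigh}; the work of handling minors is thus outsourced to the known fact that the Rayleigh class is itself minor-closed (proved by setting variables to $0$ or sending them to $\infty$ in~(\ref{eqn:rayleigh}), exactly the alternative you sketch in your last sentence). You instead keep only the ``specialize at $\mathbf{a}=\mathbf{1}$'' half of that general fact and handle minors by citing the minor-closure of the positroid class, which is indeed standard (see \cite{ardila:positroids}). Both routes are sound. The paper's route is shorter and proves nothing new, since Rayleigh~$\subseteq$~balanced is already in the literature; your route has the mild advantage of not needing the minor-closure of the Rayleigh class, at the cost of importing the minor-closure of positroids (which, one should note, holds for the matroid up to the induced cyclic order on the smaller ground set --- harmless here since Theorem~\ref{thm:rayleigh} applies to any positroid). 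Your remark about the boundary cases where $e$ or $f$ has been contracted or deleted is not really needed, as the balance condition only quantifies over elements of the minor's ground set, but it does no harm.
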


A matroid is said to be {\it{strongly Rayleigh}} if the inequality (\ref{eqn:rayleigh}) holds for all real input (as opposed to just all positive real input). Br\"and\'en showed in \cite{branden:polynomials} that being strongly Rayleigh is equivalent to the polynomial $M(\mathbf{x})$ being stable, which means that $M(\mathbf{z}) \neq 0$ for all $\mathbf{z} = (z_1,z_2,\dots,z_n) \in \cc^n$ such that $\mathrm{Im}(z_i) > 0$ for all $i$. Hence, strongly Rayleigh matroids are also called {\it{half-plane property matroids}}. It is unknown whether positroids are strongly Rayleigh.

%\begin{openquestion}
%Are positroids strongly Rayleigh?
%\end{openquestion}


\begin{thebibliography}{}

\bibitem{arkani:scattering}
N. Arkani-Hamed, et al..
\newblock {\it{Scattering amplitudes and the positive Grassmannian}},
\newblock \href{https://arxiv.org/abs/1212.5605}{arXiv:1212.5605}

\bibitem{ardila:positroids}
F$.$ Ardila, F$.$ Rincon, and L$.$ Williams.
\newblock {\it{Positroids and non-crossing partitions}},
\newblock Transactions of the American Mathematical Society, 368 (2016), 337--363.

\bibitem{borcea:negative_dependence}
J$.$ Borcea, P$.$ Br\"and\'en, T$.$ Liggett.
\newblock {\it{Negative dependence and the geometry of polynomials}},
\newblock Journal of the American Mathematical Society, 22 (2009), 521--567.

\bibitem{branden:polynomials}
P$.$ Br\"and\'en.
\newblock {\it{Polynomials with the half-plane property and matroid theory}},
\newblock Advances in Mathematics, 216 (2007), 302--320.

\bibitem{borovik:coxeter_matroids}
A$.$ Borovik, I$.$ Gelfand, and N$.$ White.
\newblock {\it{Coxeter matroids}},
\newblock Progress in Mathematics, 216, Boston: Birkh\"auser (2003).

\bibitem{bonin:lattice_path_matroid}
J$.$ Bonin, A$.$ de Mier, and M$.$ Noy.
\newblock {\it{Lattice path matroids: enumerative aspects and Tutte polynomials}},
\newblock Journal of Combinatorial Theory Series A, 104 (2003), 63--94.

\bibitem{choe:homogeneous_polynomials}
Y$.$ Choe, J$.$ Oxley, A$.$ Sokal, and D$.$ Wagner.
\newblock {\it{Homogeneous multivariate polynomials with the half-plane property}},
\newblock Advances in Applied Mathematics, 32 (2004), 88--187.

\bibitem{choe:rayleigh_matroids}
Y$.$ Choe, and D$.$ Wagner.
\newblock {\it{Rayleigh matroids}},
\newblock Combinatorics, Probability and Computing, 15 (2006), 765--781.

\bibitem{cohen:fast_mixing}
E$.$ Cohen, P$.$ Tetali, and D$.$ Yeliussizov.
\newblock {\it{Lattice path matroids: negative correlation and fast mixing}},
\newblock \href{http://arxiv.org/abs/1505.06710}{arXiv:1505.06710}.

\bibitem{doyle:random_walks}
P$.$ Doyle, and J$.$ Snell.
\newblock {\it{Random walks and electric networks}},
\newblock \href{https://arxiv.org/abs/math/0001057}{arXiv:math/0001057}
%this actually first appeared as a book and is now freely distributed. maybe cite the book.
%\newblock Carus Mathematical Monographs, 22 (1984), Mathematical Association of America, Washington DC.

\bibitem{dubhashi:positive_influence}
D$.$ Dubhashi, J$.$ Jonasson, and D$.$ Ranjan.
\newblock {\it{Positive influence and negative dependence}},
Combinatorics, Probability, and Computing, 16 (2007), 29--41.

\bibitem{feder:balanced}
T$.$ Feder, and M$.$ Mihail.
\newblock {\it{Balanced matroids}},
\newblock in ``Proceedings of the 24th Annual ACM (STOC)", Victoria B.C., ACM Press, New York, 1992.

\bibitem{kahn:negative_correlation}
J$.$ Kahn, and M$.$ Neiman,
\newblock {\it{Negative correlation and log-concavity}},
\newblock \href{http://arxiv.org/abs/0712.3507}{arXiv:0712.3507}

\bibitem{kodama:kp_solitons}
Y$.$ Kodama and L$.$ Williams,
\newblock {\it{KP solitons and total positivity for the Grassmannian}},
\newblock Inventiones Mathematicae, 198 (2014), 637--699.

\bibitem{knutson:positroid}
A$.$ Knutson, T$.$ Lam, and D$.$ Speyer.
\newblock {\it{Positroid varieties: juggling and geometry}},
\newblock Compositio Mathematica, 149 (2013), 1710--1752.

\bibitem{pemantle:negative_dependence}
R$.$ Pemantle.
\newblock {\it{Towards a theorem of negative dependence}},
\newblock Journal of Mathematical Physics, 41 (200), 1371--1390.

\bibitem{postnikov:positive_grassmannian}
A$.$ Postnikov.
\newblock {\it{Total positivity, Grassmannians, and networks}},
\newblock \href{https://arxiv.org/abs/math/0609764}{arXiv:math/0609764}

\bibitem{srinivasan:approximation_algorithms}
A$.$ Srinivasan.
\newblock {\it{Distributions on level sets with applications to approximation algorithms}},
in ``Foundations of Computer Science", (2001), 588--597.

\bibitem{wagner:masons_conjecture}
D$.$ Wagner
\newblock {\it{Mason's conjecture for independent sets in matroids}},
\newblock Annals of Combinatorics, 12 (2008), 211--239.

\bibitem{wagner:electrical_network}
D$.$ Wagner,
\newblock{\it{Matroid inequalities from electrical network theory}},
\newblock Electronic Journal of Combinatorics, 11 (2004-6), A1.

\bibitem{xu:thesis}
Y$.$ Xu.
\newblock {\it{Rayleigh property of lattice path matroids}},
\newblock Master's Thesis, University of Waterloo (2015), \href{https://uwspace.uwaterloo.ca/handle/10012/9694}{https://uwspace.uwaterloo.ca/handle/10012/9694}.

\end{thebibliography}
\end{document}